\newtheorem{proposition}{Proposition}[section]
\newtheorem{theorem}[proposition]{Theorem}
\newtheorem{lemma}[proposition]{Lemma}
\newtheorem{prop}[proposition]{Proposition}
\newtheorem{cor}[proposition]{Corollary}
\theoremstyle{definition}
\newtheorem{example}[proposition]{Example}
\theoremstyle{remark}
\newtheorem{remark}[proposition]{Remark}
\numberwithin{equation}{section}
\newcommand{\integers}{\mathbb Z}
\newcommand{\C}{\operatorname{C}}
\newcommand{\SC}{\operatorname{SC}}
\newcommand{\E}{\operatorname{E}}
\newcommand{\NC}{\operatorname{NC}}
\newcommand{\MC}{\operatorname{MC}}
\newcommand{\TW}{\operatorname{TW}}
\newcommand{\covered}{{\,\,<\!\!\!\!\cdot\,\,\,}}
\newcommand{\rank}{\mathrm{rank}}
\newcommand{\set}[1]{{\left\lbrace #1 \right\rbrace}}
\newcommand{\coef}[2]{{\left.\left\langle #1\right| #2  \right\rangle}}
\newcommand{\br}[1]{\langle #1 \rangle}
\newcommand{\Cat}{\operatorname{Cat}}
\begin{document}
\title{Chains in the noncrossing partition lattice}

\author{Nathan Reading}
\address{Department of Mathematics\\
North Carolina State University\\
Raleigh, NC 27695}
\thanks{The author was partially supported by NSF grant DMS-0202430.}
\email{nathan\_reading@ncsu.edu}
\urladdr{http://www4.ncsu.edu/\textasciitilde nreadin}
%\subjclass[2000]{Primary 20F55; Secondary 05A18, 05E15}
%\keywords{absolute order, Coxeter element, Coxeter group, generalized cluster complex, noncrossing partition, parking function}

\begin{abstract}
%sinceV1:  Rewrote abstract to avoid "false advertising" (comment by Stembridge).
We establish recursions counting various classes of chains in the noncrossing partition lattice of a finite Coxeter group.
The recursions specialize a general relation which is proven uniformly (i.e. without appealing to the classification of finite Coxeter groups) using basic facts about noncrossing partitions.
We solve these recursions for each finite Coxeter group in the classification.
Among other results, we obtain a simpler proof of a known uniform formula for the number of maximal chains of noncrossing partitions and a new uniform formula for the number of edges in the noncrossing partition lattice.
All of our results extend to the $m$-divisible noncrossing partition lattice.
\end{abstract} 

\maketitle

\section{Introduction}
The lattice of noncrossing partitions was defined and studied in 1972 by Kreweras~\cite{Kreweras}.
For surveys of results on this lattice and on its mathematical applications, see~\cite{McCammond,Simion}.
Through the results of \cite{Bessis,Biane1,BWKpi,Rei}, the noncrossing partition lattice was recognized as a special case of a construction valid for an arbitrary finite Coxeter group.
The notation~$L_W$ will stand for the noncrossing partition lattice of a finite Coxeter group~$W$.
In particular, when~$W$ is the symmetric group,~$L_W$ is the usual noncrossing partition lattice.
%sinceV1:  added the Stanley reference in the following sentence.
Detailed enumeration of chains in~$L_W$ for various Coxeter groups $W$ has been carried out in \cite{Armstrong,Ath-Rei,Biane2,Chapoton,Edelman,Kr-M,Kreweras,Rei,StNC}.

%sinceV1:  Rewrote next paragraph to avoid "false advertising" (comment by Stembridge).
The key result of this paper is a formula relating certain rank-selected chain numbers for $L_W$ to chain enumerations arising in parabolic subgroups.
%sinceV1:  Rewrote next sentence thanks to Christos' comment.
%sinceSIAM: changed C_{(...)}(W) to \C_{(...)}(W) throughout
For a sequence $(j_1,j_2,\ldots,j_{k+1})$ summing to $n=\rank(W)$, let $\C_{(j_1,j_2,\ldots,j_{k+1})}(W)$ count multichains $x_1\le x_2\le\cdots\le x_k$ in~$L_W$ with \mbox{$\ell_T(x_1)=j_1$}, $\ell_T(x_k)=n-j_{k+1}$ and $\ell_T(x_i)=\ell_T(x_{i-1})+j_i$ for $i=2,\ldots,k$.
Here $\ell_T$ is the rank function of~$L_W$.
%sinceSIAM:  Replaced all "-" signs used for complementation with "\setminus"
For each simple reflection $s\in S$, let $W_{\br{s}}$ denote the parabolic subgroup generated by $S\setminus\set{s}$.
The Coxeter number $h$ is the order of a Coxeter element of~$W$.
%sinceV1:  Rewrote next sentence thanks to Christos' comment.
%referee:  changed j_k to j_{k+1}
%frank:  inserted comma before \widehat
The sequence $(j_1,j_2,\ldots,\widehat{j_i},\ldots,j_{k+1})$ is obtained by deleting $j_i$ from $(j_1,j_2,\ldots,j_{k+1})$.
\begin{theorem}\label{jump chains}
If $(W,S)$ is a finite irreducible Coxeter system and $j_i=1$ then
%referee:  changed j_{k+i} to j_{k+1}
%frank:  inserted comma before \widehat
\[\C_{(j_1,j_2,\ldots,j_{k+1})}(W)\,=\,\frac{h}{2}\,\sum_{s\in S}\C_{(j_1,j_2,\ldots,\widehat{j_i},\ldots,j_{k+1})}(W_{\br{s}}).\]
\end{theorem}

The theorem is proved uniformly in Section~\ref{proof} by a method similar to that used by Fomin and Zelevinsky to prove a recursive formula \cite[Proposition~3.7]{ga} counting the facets of the cluster complex.
In that context, one ``rotates'' a root by a modified Coxeter element (of order $h+2$) until one obtains the negative of a simple root.
This allows one to pass to a parabolic subgroup.
Here, we rotate a reflection by an unmodified Coxeter element (of order $h$) until we obtain a simple reflection, which allows us to pass to a parabolic subgroup.

%sinceV1: altered following sentence to make it read better.
Theorem~\ref{jump chains} is a broad generalization of what appears to be the only nontrivial enumerative fact about~$L_W$  previously known to have a uniform proof:
the formula $nh/2$ for the number of atoms (or coatoms) of~$L_W$.
%sinceV1:  altered following sentence (thanks to Stembridge's comment).
There are, however, uniform bijections to other sets, namely clusters \cite{ABMW,IngThom,sortable} and sortable elements \cite{IngThom,sortable}, but no uniform proof is known for the enumeration of these other sets.
(The results of~\cite{IngThom} apply only to the crystallographic case.
The proofs in~\cite{sortable} are made uniform by the results of~\cite{typefree}.)
%sinceV1:  altered following sentence (thanks to Stembridge's comment).
There is also a uniform determination \cite[Corollary~4.4]{ABW} of the M\"{o}bius function of~$L_W$ in terms of positive clusters, but no uniform proof is known for the enumeration of positive clusters.

In Section~\ref{app sec}, we specialize Theorem~\ref{jump chains} to provide recursions for some important classes of chains.
In some cases, the recursion leads to a uniform formula.
However, even in those cases, deriving the formula from the recursion requires a type-by-type approach.
We now briefly summarize the results obtained.

We first consider the number $\MC(W)$ of maximal chains in~$L_W$.
We obtain a recursion for $\MC(W)$ and, by solving the recursion type-by-type, a uniform formula for $\MC(W)$.
As pointed out in~\cite{Chapoton}, this uniform formula follows from previous type-by-type determinations of the zeta polynomial of~$L_W$.
In the exceptional types, the recursion on $\MC(W)$ can be solved without a computer, thus providing the first verification of the formula for $\MC(W)$ without brute-force computer counting.

We next give a recursion on the number of reduced words (in the alphabet of reflections) for elements of~$L_W$.
The recursion implies a relationship between the number of such words and the face numbers of the generalized cluster complexes of~\cite{gcccc}.
Another specialization of Theorem~\ref{jump chains} leads to a uniform formula, which appears to be new, for the number of edges in~$L_W$.
More generally, we consider saturated chains in~$L_W$ of a fixed length.
The number of such chains appears to exhibit the same odd behavior observed in~\cite{gcccc} for the $f$- and $h$-numbers of generalized cluster complexes.

Theorem~\ref{jump chains} and its corollaries are statements about $h$-fold symmetry.
It is intriguing that one of the corollaries can be obtained, by taking leading coefficients, from a similar statement about the $(mh+2)$-fold symmetry of a generalized cluster complex, as explained in Section~\ref{app sec}.

We conclude the paper with a brief discussion, in Section~\ref{m sec}, of generalizations to $m$-divisible noncrossing partitions.

%sinceV1: changed title of this section.
\section{Proof of the main theorem}\label{proof}
In this section, we define~$L_W$  and gather the simple facts about Coxeter groups and noncrossing partitions that are necessary to prove Theorem~\ref{jump chains}.
We then prove the theorem and comment on the case where $W$ is reducible.
Much of the background material on~$L_W$ is due to Armstrong~\cite{Armstrong}, Bessis~\cite{Bessis} and Brady and Watt~\cite{BWorth,BWlattice}.

We assume basic background on Coxeter groups and root systems, which is found, for example, in~\cite{Bj-Br,Bourbaki,Humphreys}.
Let $(W,S)$ be a finite Coxeter system of rank~$n$.
%sinceSIAM:  changed "elements of~$W$ and their action on~$V$" to "an element of~$W$ and its action on~$V$."
We fix a representation of~$W$ as a real reflection group acting on a Euclidean space~$V$ and make no distinction between an element of~$W$ and its action on~$V$.
%sinceSIAM:  removed the sentence that was here and put it after the first sentence of the next paragraph.

Let $T$ be the set of reflections in~$W$.
%sinceSIAM:  put the sentence from the previous paragraph here.
For any reflection $t\in T$ let $H_t$ be the reflecting hyperplane associated to $t$.
Any element $w\in W$ can be written as a \emph{$T$-word}---a word in the alphabet $T$.
A \emph{reduced} $T$-word for $w$ is a $T$-word which has minimal length among all $T$-words for $w$.
The \emph{absolute length} of $w$ is the length of a reduced $T$-word for $w$.
This should not be confused with the more common notion of length in~$W$: the length of a reduced word for $w$ in the alphabet~$S$.
The \emph{absolute order} on~$W$ sets $u\le v$ if and only if $u$ has a reduced $T$-word which is a prefix of some reduced $T$-word for $v$.
Note that since $T$ is fixed as a set by conjugation,  for any $u\le v$ and any $w\in W$, the interval $[u,v]$ in the absolute order is isomorphic to the interval $[wuw^{-1},wvw^{-1}]$.

Suppose $t_1t_2\cdots t_k$ is a reduced $T$-word for $w$.
Then, for any $i\in[k-1]$, another reduced $T$-word for $w$ can be obtained by replacing $t_i$ with $t_{i+1}$ and $t_{i+1}$ with the reflection $(t_{i+1}t_it_{i+1})$, while leaving all other letters of the word unchanged.
Similarly, for $i\in[2,k]$, one can replace $t_i$ with $t_{i-1}$ and $t_{i-1}$ with $(t_{i-1}t_it_{i-1})$.
This implies that $u\le v$ if and only if $u$ has a reduced $T$-word which is a subword of some reduced $T$-word for $v$.
Furthermore $u\le v$ if and only if $u$ has a reduced $T$-word which is a postfix of some reduced $T$-word for $v$.

A Coxeter element~$c$ is any element of~$W$ of the form $c=s_1s_2\cdots s_n$ with each element of~$S$ occurring exactly once.
The order of~$c$ is the \emph{Coxeter number} $h$.
The primary object of study in this paper is the interval $[1,c]$ in the absolute order, often called the noncrossing partition lattice and denoted here by~$L_W$.
Any two Coxeter elements are conjugate in~$W,$ so the isomorphism type of~$L_W$ does not depend on the choice of Coxeter element~$c$.
The fact that~$L_W$ is a lattice was given a uniform proof in~\cite{BWlattice} and later, with slightly less generality in~\cite{IngThom}.

The Coxeter diagram of~$W$ is a tree and thus a bipartite graph.
Let $S=S_+\cup S_-$ be a bipartition of the diagram.
Define involutions 
\[c_+=\prod_{s\in S_+}s\ \ \mbox{ and }\ \ c_-=\prod_{s\in S_-}s\]
so that $c=c_-c_+$ is a Coxeter element with $c_+cc_+=c_-cc_-=c_+c_-=c^{-1}$.
Thus conjugation by $c_+$ and conjugation by $c_-$ are isomorphisms from $[1,c]$ to $[1,c^{-1}]$.
%sinceSIAM:  changed to d's here too.
Let $c_+^{\br{d}}$ be the $|d|$-fold product $c_{(-1)^d}\cdots c_+ c_-c_+$ if $d\ge 0$ or 
$c_+c_-c_+\cdots c_{(-1)^d}$ if $d<0$.
We have $c_+^{\br{d}}c_+^{\br{-d}}=1$ for any $d\in\integers$ and furthermore $c_+^{\br{2h}}=c^h=1$.

The key to the proof of Theorem~\ref{jump chains} is a result from Steinberg's 1959 paper~\cite{Steinberg} on finite reflection groups.
(See also \cite[Sections 3.16--3.20]{Humphreys} or \cite[Section V.6.2]{Bourbaki}.)
\begin{prop}\label{orbits}
{\rm (Steinberg)}\,\,
Let $(W,S)$ be an irreducible finite Coxeter system.
The orbit of any reflection under the conjugation action of the dihedral group $\br{c_+,c_-}$ either:
\begin{enumerate}
\item[(i) ]has $h/2$ elements and intersects~$S$ in a single element, or
\item[(ii) ]has $h$ elements and intersects~$S$ in a two-element set.
\end{enumerate}
\end{prop}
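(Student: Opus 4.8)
The plan is to transport the conjugation action of $D:=\br{c_+,c_-}$ on the reflection set $T$ onto the \emph{Coxeter plane}, where it becomes the action of a dihedral group on a planar arrangement of lines through the origin; this is essentially Steinberg's argument. If $\rank(W)=1$ the statement is trivial, so assume $\rank(W)\ge 2$; then $h\ge 3$ and both $S_+$ and $S_-$ are nonempty. I would use the standard description of the Coxeter plane (see \cite{Steinberg}, or \cite[\S\S 3.16--3.20]{Humphreys}, \cite[\S V.6.2]{Bourbaki}): there is a two-dimensional subspace $P\subseteq V$, stable under $c$, on which $c$ acts as a rotation by $2\pi/h$, and which is contained in no reflecting hyperplane. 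Since $c_+$ conjugates $c$ to $c^{-1}$ it interchanges the complex eigenlines of $c$ for the eigenvalues $e^{\pm 2\pi i/h}$ and hence preserves $P$; likewise $c_-$, so $D$ preserves $P$. On $P$, the map $\bar c:=c|_P$ is a rotation of order $h$ and $\bar c_\pm:=c_\pm|_P$ are orthogonal involutions; neither $\bar c_\pm$ is $\pm\,\mathrm{id}$ (if $\bar c_+=\mathrm{id}$ then $P\subseteq H_s$ for every $s\in S_+$; if $\bar c_+=-\,\mathrm{id}$ then $\bar c_-=-\bar c$ is an involution, forcing $h\le 2$), and $D\to\mathrm O(P)$ is injective (a nontrivial element of $D$ fixing $P$ pointwise would be a nontrivial power of $c$ or one of the planar reflections $\bar c^i\bar c_+$). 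Thus $D$ acts on $P$ as the symmetry group of a regular $h$-gon, whose $h$ reflecting lines are $m:=\mathrm{Fix}(\bar c_+)$, $m':=\mathrm{Fix}(\bar c_-)$ (subtending the angle $\pi/h$) and their images under the powers of $\bar c$.

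The link between the two actions is the $D$-equivariant map $t\mapsto m_t:=H_t\cap P$, which takes values in lines through the origin because $P\not\subseteq H_t$, and is equivariant because $wH_t=H_{wtw^{-1}}$ while $w$ preserves $P$ for $w\in D$. I would next pin this map down on the simple reflections: for $s\in S_+$ the members of $S_+$ mutually commute, so $c_+ s c_+=s$, hence $\bar c_+$ stabilizes the line $m_s$; and $\bigcap_{s\in S_+}m_s=\bigl(\bigcap_{s\in S_+}H_s\bigr)\cap P=\mathrm{Fix}(c_+)\cap P=\mathrm{Fix}(\bar c_+)=m$, which is one-dimensional, so, each $m_s$ (for $s\in S_+$) being a $\bar c_+$-stable line and therefore equal to $m$ or $m^\perp$, they all coincide with $m$. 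Thus $m_s=m$ for every $s\in S_+$, and symmetrically $m_s=m'$ for every $s\in S_-$. Consequently, as $t$ ranges over the $D$-orbits in $T$ that meet $S$, the lines $m_t$ range over exactly the $h$ reflecting lines of $D|_P$: the orbit of an $s\in S_+$ traces out $\set{\bar c^i m}$ and that of an $s\in S_-$ traces out $\set{\bar c^i m'}$.

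What remains is the substantive content of Steinberg's result, which I would cite rather than reprove and which I expect to be the real obstacle to a self-contained argument: the planar arrangement $\set{H_t\cap P:t\in T}$ is \emph{exactly} the reflection arrangement of $D|_P$ (every $H_t$ meets $P$ in one of the $h$ reflecting lines), the fibre of $t\mapsto m_t$ over $m$ is exactly $S_+$ and that over $m'$ is exactly $S_-$, and no $D$-orbit in $T$ contains more than two simple reflections. Granting this, I would finish by orbit--stabilizer in the dihedral group $D|_P$. Fix $t\in T$; conjugating by a power of $c$ we may assume $m_t=m$, and then $t\in S_+$, so that $\mathcal O_t\twoheadrightarrow D\cdot m$ is a surjection of transitive $D$-sets whose fibres all have a common size $f:=\lvert\mathcal O_t\cap S_+\rvert$ (a single orbit of $\mathrm{Stab}_D(m)$). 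The stabilizer of a reflecting line in $D|_P$ is $\set{1,\bar c_+}$ when $h$ is odd and the Klein four-group $\set{1,\bar c_+,\bar c^{h/2},\bar c^{h/2}\bar c_+}$ when $h$ is even (in the latter case $\bar c^{h/2}=-\,\mathrm{id}$ fixes every line, and $\bar c^{h/2}\bar c_+$ is the reflection in $m^\perp$), so $\lvert D\cdot m\rvert$ equals $h$ for $h$ odd and $h/2$ for $h$ even. If $h$ is even then $m'\notin D\cdot m$, so $\mathcal O_t$ meets $S$ in $\mathcal O_t\cap S_+$, a set of $f\le 2$ elements, and $\lvert\mathcal O_t\rvert=f\cdot h/2$: this is case (i) when $f=1$ and case (ii) when $f=2$. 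If $h$ is odd then $m'\in D\cdot m$, the fibre of $\mathcal O_t$ over $m'$ is $\mathcal O_t\cap S_-$ and also has $f$ elements, so $\mathcal O_t\cap S$ has $2f$ elements; by the bound this forces $f=1$, whence $\lvert\mathcal O_t\rvert=h$ and $\mathcal O_t$ meets $S$ in two elements, which is case (ii). Finally, every $D$-orbit traces a line in $(D\cdot m)\cup(D\cdot m')$, hence contains a simple reflection, so these are all the orbits; the identity $\lvert T\rvert=\sum_{\mathcal O}\lvert\mathcal O\rvert=\tfrac h2\sum_{\mathcal O}\lvert\mathcal O\cap S\rvert=\tfrac h2\,\rank(W)$ serves as a consistency check.
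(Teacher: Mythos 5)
The paper contains no proof of Proposition~\ref{orbits} to compare against: it is quoted from Steinberg's 1959 paper \cite{Steinberg}, with pointers to \cite[Sections 3.16--3.20]{Humphreys} and \cite[Section V.6.2]{Bourbaki}. Your sketch is a correct reconstruction of Steinberg's Coxeter-plane argument, and you have isolated the genuinely hard inputs accurately --- that every reflecting hyperplane meets the plane $P$ in one of the $h$ axes of the planar dihedral action, and that the fibres of $t\mapsto H_t\cap P$ over $m$ and $m'$ are exactly $S_+$ and $S_-$ (in \cite{Humphreys} this comes from choosing $P$ so that $m$ meets the closed fundamental chamber and invoking the parabolic stabilizer theorem) --- and these live precisely in the references the paper itself cites, so deferring them is no worse than what the paper does. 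The reduction from those facts to the orbit statement is sound: equivariance of $t\mapsto H_t\cap P$, faithfulness of the restriction of $\br{c_+,c_-}$ to $P$, the computation of line stabilizers (order $2$ for $h$ odd, order $4$ for $h$ even since $c^{h/2}$ restricts to $-\mathrm{id}$), and constancy of fibre sizes over a transitive base are all correct. Two small points. First, when $h$ is even you cannot always arrange $m_t=m$ by conjugating by a power of $c$, since the axes form two $\br{c}$-orbits; the case $m_t\in D\cdot m'$ must be handled by the evident symmetry exchanging $S_+$ and $S_-$, as you implicitly intend. Second, your third cited fact (no orbit contains more than two simple reflections) need not be cited at all: because $c_+$ centralizes $S_+$, the orbit of $s\in S_+$ under the stabilizer of $m$ is $\set{s,\,c^{h/2}sc^{-h/2}}$ when $h$ is even and $\set{s}$ when $h$ is odd, which gives your $f=|\mathcal{O}_t\cap S_+|\le 2$ directly, and forces $f=1$ in the odd case without appeal to the cited bound.
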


The proof of Theorem~\ref{jump chains} uses some fundamental facts about absolute order which we now quote as Theorem~\ref{BWmain}.
A clean seven-page exposition (with complete proofs) of these results can be obtained by reading Brady and Watt's paper~\cite{BWorth} followed by Section 2 of their paper~\cite{BWKpi}.
We phrase these properties in terms of fixed spaces $F_w=\mbox{kernel}(w-I)$ of elements $w\in W,$ rather than the  \emph{moved spaces} of~\cite{BWorth}.
This change is harmless because the moved space is the orthogonal complement of~$F_w$.
Note that $H_t=F_t$ for each $t\in T$.

\begin{theorem}\label{BWmain}
{\rm (Brady and Watt)}
\begin{enumerate}
\item[(i) ]If $t_1t_2\cdots t_k$ is a reduced $T$-word for $w\in W$ then $F_w=H_{t_1}\cap H_{t_2}\cap\cdots\cap H_{t_k}$.
\item[(ii) ] If $x,y\in [1,c]$ then $x\le y$ if and only if $F_y\subseteq F_x$.
\item[(iii) ] If $t\in T$ then $t\le c$.
\end{enumerate}
\end{theorem}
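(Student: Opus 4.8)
The plan is to treat the three assertions in sequence, with the reverse implication of (ii) as the technical core. Throughout I write $M_w=\operatorname{im}(w-I)$ for the \emph{moved space} of $w$, the orthogonal complement of $F_w$, so that statements about fixed spaces translate into statements about moved spaces. The one tool I would establish first is the identity $\ell_T(w)=\dim M_w=\operatorname{codim}F_w$. The inequality $\dim M_w\le\ell_T(w)$ is elementary: from $uv-I=(u-I)+u(v-I)$ one gets $M_{uv}\subseteq M_u+uM_v$, hence $\dim M$ is subadditive under multiplication, and applying this to a reduced $T$-word gives $\dim M_w\le\ell_T(w)$. The reverse inequality---that $w$ factors as a product of exactly $\dim M_w$ reflections \emph{drawn from $T$}---is the substantive input from Brady and Watt's study of the orthogonal group: one argues inductively that whenever $M_w\neq 0$ there is a reflection $t\in T$ with $\dim M_{tw}=\dim M_w-1$, peeling off one dimension at a time. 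This is exactly the place where the hypothesis that $W$ is a real reflection group (so that $T$ is rich enough) enters.

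Assertion (i) then follows by a dimension count. If $t_1\cdots t_k$ is reduced then $k=\ell_T(w)=\operatorname{codim}F_w$, so $\dim F_w=n-k$. Any vector fixed by each $t_i$ is fixed by their product, giving the inclusion $H_{t_1}\cap\cdots\cap H_{t_k}\subseteq F_w$; since the left-hand side is an intersection of $k$ hyperplanes it has dimension at least $n-k=\dim F_w$. The inclusion and the dimension comparison force equality.

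For (ii), the forward implication is immediate from (i): if $x\le y$, choose a reduced $T$-word for $y$ having a reduced $T$-word for $x$ as a prefix; intersecting the initial block of hyperplanes gives $F_x$, intersecting all of them gives $F_y$, so $F_y\subseteq F_x$. The reverse implication is the main obstacle, and I expect to spend the most effort here. One cannot hope to argue in $O(V)$ alone: two distinct rotations through different angles of a common plane have equal (trivial) fixed spaces yet are incomparable, so some feature of the interval $[1,c]$ must be used. The strategy I would follow is to prove, via the moved-space calculus, that $w\mapsto F_w$ is injective on $[1,c]$ and that the defining property $F_c=0$ (that $c$ is fixed-point-free) rules out precisely these rotational degeneracies. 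Granting this, from $F_y\subseteq F_x$ with $x,y\le c$ one produces $z$ with $y=xz$ and $\ell_T(y)=\ell_T(x)+\ell_T(z)$; concatenating reduced $T$-words for $x$ and for $z$ exhibits a reduced $T$-word for $y$ with $x$ as a prefix, whence $x\le y$. Establishing this additive factorization---equivalently, the additivity of $\dim M$ along $x^{-1}y$ under the hypotheses---is the real work.

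Finally, (iii) can be obtained cleanly and independently of the length--codimension machinery. Let $R$ be the set of reflections lying below $c$. Reversing a reduced $T$-word for $c$ yields one for $c^{-1}$ with the same letters, so by the subword characterization $R$ is also the set of reflections below $c^{-1}$. Since conjugation by $c_+$ and by $c_-$ carries $[1,c]$ isomorphically onto $[1,c^{-1}]$, it sends reflections below $c$ to reflections below $c^{-1}$; hence $R$ is stable under the dihedral group $\br{c_+,c_-}$. Each simple reflection is a letter of the reduced $T$-word $s_1s_2\cdots s_n$ for $c$ and so lies in $R$, while by Proposition~\ref{orbits} every $\br{c_+,c_-}$-orbit of reflections meets $S$. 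Therefore $R$ contains the $\br{c_+,c_-}$-orbit of every simple reflection, which is all of $T$; that is, every reflection lies below $c$.
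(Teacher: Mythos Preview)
The paper does not prove this theorem; it quotes it from Brady and Watt, pointing to \cite{BWorth} and Section~2 of \cite{BWKpi} for a self-contained exposition. Your treatment of (i) and of the forward implication in (ii) matches their approach via the identity $\ell_T(w)=\operatorname{codim}F_w$.

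For the reverse implication in (ii) you correctly isolate the difficulty and correctly observe that the hypothesis $x,y\in[1,c]$ is essential, but you do not supply an argument: injectivity of $w\mapsto F_w$ on $[1,c]$ does not by itself produce the additive factorization $\ell_T(y)=\ell_T(x)+\ell_T(x^{-1}y)$, and you concede this is ``the real work.'' What Brady and Watt actually prove, working in $O(V)$ with \emph{all} reflections, is that for every $b\in O(V)$ the map $a\mapsto M_a$ is a poset isomorphism from $[1,b]$ onto the lattice of subspaces of $M_b$; the substantive step is existence, constructing for each subspace $U\subseteq M_b$ an $a\le b$ with $M_a=U$. Assertion (ii) for $W$ then follows because $\ell_T$ in $W$ equals reflection length in $O(V)$ on elements of $W$ (both are $\operatorname{codim}F_w$), so the absolute order on $W$ is the restriction of the $O(V)$ order, and one applies the $O(V)$ result with $b=c$. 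Your sketch gestures at this structure but leaves the central construction undone.

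Your argument for (iii), by contrast, is correct and takes a genuinely different route from Brady and Watt. They obtain (iii) from the same $O(V)$ machinery: since $F_c=0$, every root line is $M_a$ for a unique $a$ in $[1,c]_{O(V)}$, and that $a$ must be the reflection orthogonal to the line, which lies in $W$. You instead stay inside $W$, showing that the set of reflections below $c$ is stable under $\br{c_+,c_-}$ and contains $S$, then invoking Proposition~\ref{orbits}. This is pleasantly elementary and avoids the orthogonal-group detour, at the price of importing Steinberg's orbit description. One small caveat: to place each $s_i$ below $c$ via the subword characterization you still need $s_1\cdots s_n$ to be $T$-reduced, i.e.\ $\ell_T(c)=n$, so the argument is not entirely independent of the length--codimension identity.
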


One useful consequence of Theorem~\ref{BWmain} is the following lemma (cf. \cite[Lemma 2.3]{BWKpi}).
Recall that $W_{\br{s}}$ is the parabolic subgroup generated by $S\setminus\set{s}$ and let $c'$ be the Coxeter element for $W_{\br{s}}$ obtained by deleting~$s$ from the defining word for~$c$.
%referee:  Revised the material below, through the statement and proof of Lemma~\ref{para}.
The subword characterization of absolute order implies that $c'<c$, so $[1,c']\subset[1,c]$.
We write $[u,v]_{\br{s}}$ for an interval in the absolute order on $W_{\br{s}}$ and continue to write $[u,v]$ for an interval in the absolute order on $W$.

\begin{lemma}\label{para}
The inclusion $W_{\br{s}}\hookrightarrow W$ restricts to an isomorphism from $[1,c']_{\br{s}}$ to $[1,c']$.
\end{lemma}
\begin{proof}
Theorem~\ref{BWmain}(ii) implies that the inclusion $W_{\br{s}}\hookrightarrow W$ restricts to an isomorphism from $[1,c']_{\br{s}}$ to its image and, furthermore, that this image is contained in $[1,c']$.
The parabolic subgroup $W_{\br{s}}$ is the set of elements of~$W$ which fix the subspace $\cap_{s'\in\br{s}}H_{s'}$.
But $\cap_{s'\in\br{s}}H_{s'}=F_{c'}$ by Theorem~\ref{BWmain}(i), so Theorem~\ref{BWmain}(ii) implies that $[1,c']\subseteq W_{\br{s}}$.
Thus the inclusion of $[1,c']_{\br{s}}$ in $W$ is the entire interval $[1,c']$.
\end{proof}

To simplify the proof of Theorem~\ref{jump chains}, we employ a basic result about counting multichains in~$L_W$.
(Cf. \cite[Lemma~3.1.2]{Armstrong}.)
\begin{prop}\label{permute}
If $(j'_1,\ldots,j'_{k+1})$ is a permutation of $(j_1,\ldots,j_{k+1})$ then 
\[\C_{(j'_1,\ldots,j'_{k+1})}(W)=\C_{(j_1,\ldots,j_{k+1})}(W).\]
\end{prop}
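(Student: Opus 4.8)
The plan is to reduce the statement first to the interchange of two adjacent entries, and then to the self-duality of a single interval of $L_W$. Since the adjacent transpositions generate the symmetric group on $\{1,\ldots,k+1\}$, it is enough to prove that $C_{(j_1,\ldots,j_{k+1})}(W)$ is unchanged when $j_i$ and $j_{i+1}$ are interchanged, for a fixed $i\in\{1,\ldots,k\}$.

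First I would rephrase the definition of $C$. Setting $x_0:=1$ and $x_{k+1}:=c$, the number $C_{(j_1,\ldots,j_{k+1})}(W)$ counts multichains $x_0\le x_1\le\cdots\le x_{k+1}$ in $L_W=[1,c]$ with $\ell_T(x_\ell)-\ell_T(x_{\ell-1})=j_\ell$ for every $\ell\in\{1,\ldots,k+1\}$; the conditions at $\ell=1$ and $\ell=k+1$ are consistent with the original ones because $\sum_\ell j_\ell=n$. I would then sort these multichains according to the values of all coordinates other than $x_i$. Such a choice of the other coordinates is exactly a multichain $x_0\le\cdots\le x_{i-1}\le x_{i+1}\le\cdots\le x_{k+1}$ carrying the prescribed jumps away from position $i$ and with $\ell_T(x_{i+1})-\ell_T(x_{i-1})=j_i+j_{i+1}$; and, given such a choice, the number of completions $x_i$ realizing the vector $(\ldots,j_i,j_{i+1},\ldots)$ is the number of $z$ in the interval $[x_{i-1},x_{i+1}]$ with $\ell_T(z)-\ell_T(x_{i-1})=j_i$, while the number realizing the interchanged vector is the same count with $j_i$ replaced by $j_{i+1}$. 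The set of admissible choices of the other coordinates is visibly the same for both vectors, so summing reduces the problem to the following claim: for every $u\le v$ in $L_W$, the interval $[u,v]$ has equally many elements at rank $a$ above $u$ and at rank $b$ above $u$ whenever $a+b$ equals the rank of $[u,v]$.

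This claim follows from the self-duality of the interval $[u,v]$, since an order-reversing bijection of a finite graded poset of rank $r$ carries the rank-$a$ elements bijectively onto the rank-$(r-a)$ elements. The self-duality of intervals of $L_W$ is standard: every interval $[u,v]$ of $L_W$ is isomorphic as a poset to a direct product $L_{W_1}\times\cdots\times L_{W_m}$ of noncrossing partition lattices of finite Coxeter groups (see \cite{Armstrong,Bessis,BWKpi}; for $u=1$ this generalizes Lemma~\ref{para}), and each $L_{W_j}$ is self-dual via the Kreweras complement $x\mapsto x^{-1}c_j$, so the product, and hence $[u,v]$, is self-dual.

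The one substantial ingredient is the structural description of the intervals of $L_W$ as products of noncrossing partition lattices, equivalently their self-duality; everything else is bookkeeping with the definition of $C$. A self-contained alternative would be to verify the self-duality of each interval $[u,v]$ directly from Theorem~\ref{BWmain} and the characterization $u'\le v'\iff \ell_T(v')=\ell_T(u')+\ell_T\bigl((u')^{-1}v'\bigr)$ of the absolute order, but this requires somewhat more manipulation of fixed spaces.
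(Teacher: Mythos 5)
Your proposal is correct, but it takes a genuinely different route from the paper. Both arguments begin by reducing to the interchange of two adjacent entries $j_i,j_{i+1}$. The paper then encodes each multichain (with $x_0=1$, $x_{k+1}=c$ adjoined, exactly as you do) by its \emph{delta sequence} $(x_0^{-1}x_1,\,x_1^{-1}x_2,\ldots,x_k^{-1}x_{k+1})$, characterized as the sequences of elements of $L_W$ whose absolute lengths sum to $n$ and whose product is $c$, and swaps the two relevant entries by the conjugation trick $\delta'_{i-1}=\delta_{i-1}\delta_i\delta_{i-1}^{-1}$, $\delta'_i=\delta_{i-1}$; this is an explicit bijection whose only inputs are that characterization and the conjugation-invariance of absolute length, and the same delta-sequence machinery is reused later for the $m$-divisible generalization. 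You instead fix all coordinates other than $x_i$ and appeal to rank-symmetry of the interval $[x_{i-1},x_{i+1}]$, deduced from self-duality, which you support by the structural theorem that every interval of $L_W$ is a product of noncrossing partition lattices of parabolic subgroups together with the Kreweras complement $x\mapsto x^{-1}c$. That theorem is true and uniformly provable, so there is no gap, but it is a substantially heavier external input than the paper uses, and it is not proved in the paper (Lemma~\ref{para} covers only a special case). Your closing remark points to the better-balanced version of your argument: the self-duality of $[u,v]$ can be checked directly from the length-additivity characterization of absolute order (e.g.\ via $z\mapsto u^{-1}z\mapsto (u^{-1}z)^{-1}(u^{-1}v)$), which is essentially the same elementary manipulation as the paper's conjugation swap. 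What the two approaches buy: yours isolates the conceptual reason (rank-symmetry of all intervals, a stronger statement than needed), while the paper's is self-contained, bijective, and transfers verbatim to the $m$-divisible setting of Section~\ref{m sec}.
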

\begin{proof}
%sinceV1:  Rewrote next sentence thanks to Christos' comment.
It is sufficient to prove the proposition in the case where the two sequences agree except that $j'_i=j_{i+1}$ and $j'_{i+1}=j_i$ for some $i$.
Setting $x_0=1$ and $x_{k+1}=c$, a multichain $x_1\le x_2\le\cdots\le x_k$ is uniquely encoded by the sequence 
%referee:  Inserted parentheses on right side
\[(\delta_0,\ldots,\delta_{k})=(x_0^{-1}x_1,x_1^{-1}x_2,\ldots,x_k^{-1}x_{k+1}).\]
In~\cite{Armstrong}, this is called the \emph{delta sequence} of $x_1\le x_2\le\cdots\le x_k$.
A sequence of elements of~$L_W$ is a delta sequence for some multichain in~$L_W$ if and only if the absolute lengths of the elements of the sequence sum to $n=\rank(W)$ and the product, in order, of the sequence is~$c$.
A multichain is counted by $\C_{(j_1,\ldots,j_{k+1})}(W)$ if and only if its delta sequence has $\ell_T(\delta_{i-1})=j_i$ for all $i$.
Given a delta sequence $\delta=(\delta_0,\ldots,\delta_{k})$ with this property, define a new sequence $\delta'=(\delta'_0,\ldots,\delta'_{k})$ agreeing with $\delta$ except that $\delta'_{i-1}=\delta_{i-1}\delta_{i}\delta_{i-1}^{-1}$ and $\delta'_{i}=\delta_{i-1}$.
It is immediate that $\delta'$ is the delta sequence for a multichain counted by $\C_{(j'_1,\ldots,j'_{k+1})}(W)$ and furthermore that the map $\delta\mapsto\delta'$ defines a bijection between the two sets of chains.
\end{proof}

%referee:  changed k(t) to d(t) and k to d as appropriate through the end of this proof.
%referee:  reordered and revised the proof to better explain the conclusion.
We now prove the main theorem.
\begin{proof}[Proof of Theorem~\ref{jump chains}]
We continue to fix a particular Coxeter element $c=c_-c_+$.
Proposition~\ref{permute} implies that it is enough to consider the case where $i=k+1$.
A multichain counted by $\C_{(j_1,\ldots,j_k,1)}(W)$ consists of an element $x$ covered by~$c$ and a multichain in $[1,x]$ with rank-differences given by $(j_1,\ldots,j_k)$.
In light of Theorem~\ref{BWmain}(iii) and the prefix/postfix characterization of absolute order, an element $x$ is covered by~$c$ if and only if $x=ct$ for some $t\in T$.

For each $t\in T$, consider the orbit of $[1,ct]$ under the conjugation action of $\br{c_+,c_-}$.
Since conjugation by $c_-$ is an involutive isomorphism from $[1,c]$ to $[1,c^{-1}]$, all intervals in this orbit are isomorphic.
In light of Proposition~\ref{orbits}, we can define $d=d(t)$ to be the smallest $d\ge 0$ such that 
$c_+^{\br{d}}tc_+^{\br{-d}}=c_+^{\br{d+1}}tc_+^{\br{-d-1}}=s$ for some $s\in S$.
(Cf. the proof of \cite[Lemma~4.1]{ca2}.)
Let $s=s(t)=c_+^{\br{d}}tc_+^{\br{-d}}$.
If $d$ is even then $c_+sc_+=s$, or in other words, $s\in S_+$.
Thus in this case 
\[c_+^{\br{d}}(ct)c_+^{\br{-d}}=cs=\prod_{s'\in S_-}s'\prod_{s''\in (S_+\setminus\set{s})}s''.\]
If $d$ is odd then $s\in S_-$ and 
\[c_+^{\br{d}}(ct)c_+^{\br{-d}}=c^{-1}s=\prod_{s'\in S_+}s'\prod_{s''\in (S_-\setminus\set{s})}s''.\]
In either case $[1,ct]$ is isomorphic to $[1,c']$ where $c'$ is some Coxeter element for $W_{\br{s}}$.

By Lemma~\ref{para}, we conclude that $\C_{(j_1,\ldots,j_k,1)}(W)$ counts triples $(t,s,\mu)$ such that $t\in T$, $s=s(t)$ and $\mu$ is a multichain in $L_{W_{\br{s}}}$ with rank-differences given by $(j_1,\ldots,j_k)$.
Alternately, we can count such pairs by first specifying $s$ and applying Proposition~\ref{orbits}.
Each $s$ belonging to an orbit of size $\frac{h}{2}$ contributes $\frac{h}{2}\C_{(j_1,\ldots,j_{k})}(W_{\br{s}})$ to the count.
For each pair $s,s'$ belonging to an orbit of size $h$, the lattices $L_{W_{\br{s}}}$ and $L_{W_{\br{s'}}}$ are isomorphic, because they are isomorphic to conjugate intervals in $L_W$.
Together $s$ and $s'$ contribute $h\cdot \C_{(j_1,\ldots,j_{k})}(W_{\br{s}})=h\cdot \C_{(j_1,\ldots,j_{k})}(W_{\br{s'}})$ to the count.
\end{proof}

In general, the parabolic subgroups $W_{\br{s}}$ appearing in Theorem~\ref{jump chains} are not irreducible. 
When $W$ is reducible,~$L_W$ is a direct product.
Thus by basic chain-counting techniques we have

\begin{prop}
\label{reducible}
%sinceV1:  Rewrote next sentence thanks to Christos' comment.
If $W=W_1\times W_2$ with $\rank(W_1)=n_1$ then 
%referee:  Removed parentheses in RHS.
\[\C_{(j_1,\ldots,j_{k+1})}=\sum \C_{(j'_1,\ldots,j'_{k+1})}(W_1)\cdot \C_{\left(j_1-j'_1,\ldots,j_{k+1}-j'_{k+1}\right)}(W_2),\]
where the sum is over all sequences $(j'_1,\ldots,j'_{k+1})$ summing to $n_1$ and having $j'_i\le j_i$ for all $i\in[k+1]$.
\end{prop}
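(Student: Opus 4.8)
The plan is to reduce everything to the fact that when $W = W_1 \times W_2$, the noncrossing partition lattice $L_W$ is the direct product $L_{W_1} \times L_{W_2}$, and that the rank function $\ell_T$ is additive across this product. First I would verify the product decomposition: if $c = c_1 c_2$ is a Coxeter element of $W$ with $c_i$ a Coxeter element of $W_i$ (the two factors commute since they act on orthogonal subspaces), then $[1,c]$ in $W$ is isomorphic as a poset to $[1,c_1] \times [1,c_2]$. This follows because any $T$-word for an element below $c$ splits uniquely into a $T_1$-word and a $T_2$-word (reflections of $W$ are reflections of one factor or the other), reduced words restrict to reduced words on each factor, and the subword/prefix characterizations of absolute order respect this splitting; alternatively one can invoke Theorem~\ref{BWmain}(ii), since $F_x = F_{x_1} \oplus (V_2)$-type decompositions match up. Crucially, $\ell_T(x) = \ell_T(x_1) + \ell_T(x_2)$ under this identification, since absolute length is the codimension of the fixed space.

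Next I would translate the multichain count through this product. A multichain $x_1 \le \cdots \le x_k$ in $L_W$ corresponds bijectively to a pair of multichains $x_1^{(1)} \le \cdots \le x_k^{(1)}$ in $L_{W_1}$ and $x_1^{(2)} \le \cdots \le x_k^{(2)}$ in $L_{W_2}$, with $x_i = (x_i^{(1)}, x_i^{(2)})$. Setting $x_0 = 1$, $x_{k+1} = c$ as in the proof of Proposition~\ref{permute}, the rank-difference sequence of the product multichain is the coordinatewise sum of the rank-difference sequences of the two component multichains. Writing $(j'_1, \ldots, j'_{k+1})$ for the rank-difference sequence of the $W_1$-component (which sums to $\rank(W_1) = n_1$ and satisfies $0 \le j'_i \le j_i$ since ranks are nonnegative and sum coordinatewise to $j_i$), the $W_2$-component then necessarily has rank-difference sequence $(j_1 - j'_1, \ldots, j_{k+1} - j'_{k+1})$, which sums to $\rank(W_2)$. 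Summing over all admissible $(j'_1, \ldots, j'_{k+1})$ partitions the multichains counted by $C_{(j_1, \ldots, j_{k+1})}(W)$ according to the rank data of the first component, and the bijection shows each block has size $C_{(j'_1, \ldots, j'_{k+1})}(W_1) \cdot C_{(j_1 - j'_1, \ldots, j_{k+1} - j'_{k+1})}(W_2)$. This yields the stated identity.

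The main obstacle, and the only step requiring genuine care rather than bookkeeping, is establishing that the poset $[1,c]$ genuinely factors as $[1,c_1] \times [1,c_2]$ with additive rank — in particular that a reduced $T$-word for an element $x \le c$ really does separate cleanly into reduced $T$-words over $T_1$ and $T_2$ with no interaction, so that $\ell_T$ is additive. This is exactly where one uses that the two factors act on orthogonal subspaces: the reflecting hyperplanes of $W_1$ all contain $V_2$ and vice versa, so by Theorem~\ref{BWmain}(i) the fixed space $F_x$ decomposes as $(F_x \cap \text{[moved space of }W_1]^\perp)$-style orthogonal pieces, and codimensions add. Once this is in hand, everything else is the standard chain-counting in a product of posets, and the phrase "basic chain-counting techniques" in the statement is fully justified. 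I would keep the write-up short, citing the orthogonality of the factors and Theorem~\ref{BWmain} for the product decomposition and leaving the combinatorial summation as routine.
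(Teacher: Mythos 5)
Your argument is correct and is essentially the paper's own: the paper simply observes that $L_W$ is a direct product when $W$ is reducible and then invokes basic chain-counting in a product of graded posets, which is exactly the decomposition (with additive rank $\ell_T$) that you spell out in more detail. No discrepancy to report.
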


\begin{remark}\label{one formula}
%sinceV1:  deleted "recursive" before "formula."
Stembridge~\cite{Stembridge personal} pointed out that Theorems~\ref{jump chains} and Proposition~\ref{reducible} can be replaced by a single formula.
Let~$W$ be a finite Coxeter group, not necessarily irreducible.
For each $s\in S$, let $h_s$ denote the Coxeter number of the irreducible component of~$W$ containing~$s$.
%referee: changed _k to _l
One factors $c_+$ as $(c_+)_1(c_+)_2\cdots(c_+)_l$, where $(c_+)_i$ is in the $i$th irreducible component of~$W,$ and similarly for $c_-$.
For each $s\in S$, let $(c_+)_s$ be $(c_+)_i$ if~$s$ is in the $i$th irreducible component of~$W,$ and similarly $(c_-)_s$.
Replacing $c_\pm$ and $h$ by $(c_\pm)_s$ and~$h_s$ in the proof of Theorem~\ref{jump chains}, we obtain
%referee:  changed j_{k+i} to j_{k+1}
%frank:  inserted comma before \widehat
\[\C_{(j_1,j_2,\ldots,j_{k+1})}(W)\,=\,\frac{1}{2}\,\sum_{s\in S}h_s\cdot\C_{(j_1,j_2,\ldots,\widehat{j_i},\ldots,j_{k+1})}(W_{\br{s}}).\]
\end{remark}

\begin{remark}\label{ga sym}
We have seen that Proposition~\ref{orbits} describes a fundamental symmetry of~$L_W$.
In fact, the defining symmetry of cluster complexes also ultimately rests on Proposition~\ref{orbits}.
Specifically, \cite[Theorem~2.6]{ga}, which establishes the dihedral symmetry of the cluster complex, is a corollary of \cite[Proposition~2.5]{ga}, which in turn uses \cite[Lemma~2.1]{ga}, cited to \cite[Exercise~V.6.2]{Bourbaki}.
Proposition~\ref{orbits} is not stated explicitly in~\cite{Bourbaki}, but is the key to the results which can be applied to solve \cite[Exercise~V.6.2]{Bourbaki}.
And in fact, \cite[Theorem~2.6]{ga} is an easy corollary of Proposition~\ref{orbits}.
\end{remark}

\begin{remark}\label{nc rotate}
The concepts involved in the proof of Theorem~\ref{jump chains} shed light on another similarity between noncrossing partitions and clusters.
The definition of the cluster complex rests on a ``compatibility'' relation on certain roots.
%referee:  changed "sub root system" to "root subsystem"
A negative simple root $-\alpha$ is compatible with a root $\beta$ if and only if $\beta$ belongs to the parabolic root subsystem obtained by deleting $\alpha$.
The rest of the compatibility relation is defined by requiring that compatibility be invariant under the dihedral action of $\br{\tau_+,\tau_-}$, where $\tau_\pm$ is a modification of $c_\pm$.
A similar approach can be made to noncrossing partitions.
For $s\in S_-$ and $t\in T$, we have $st\in[1,c]$ if and only if $t\in W_{\br{s}}$.
When $s\in S_+$, we have $ts\in[1,c]$ if and only if $t\in W_{\br{s}}$.
This observation, together with the fact that the conjugation action of $\br{c_+,c_-}$ acts by automorphisms, completely determines~$L_W$.
\end{remark}

%sinceV1: changed title of this section.
\section{Applications of the main theorem}\label{app sec}
%sinceV1: Added an opening paragraph.  [Actually, didn't]
%Because of the hypothesis $j_i=1$ in Theorem~\ref{jump chains}, the theorem stops short of providing full recursive information on the number of chains in $L_W$ visiting specified ranks.
%In this section, we show how Theorem~\ref{jump chains} can be specialized to count several interesting classes of chains.

\subsection*{Maximal chains}\label{max sec}
%sinceV1:  changed "at least two" to "several"
The maximal chains in~$L_W$ are of particular interest for several reasons: 
For any finite Coxeter group, the maximal chains in~$L_W$ index the maximal faces in a CW-complex which is an Eilenberg-Maclane space (or ``K$(\pi,1)$'') for the associated Artin group~\cite{B3,BWKpi}. 
Furthermore, maximal chains of classical noncrossing partitions are in bijection with parking functions~\cite{StNC}.
%sinceV1:  added the following sentence:
In fact, the combinatorics of parking functions encodes rank-selected chain enumeration in the classical case \cite[Proposition~3.2]{StNC}.
Generalizations to the case $W=B_n$ have been studied in~\cite{Biane2,Hersh}.
%sinceV1:  added the following sentence:
The number of maximal chains of classical noncrossing partitions also coincides with the dimension of the ring of diagonal coinvariants.
(See, for example \cite[Theorem~4.2.4]{Haiman}.)

Let $\MC(W)$ denote the number of maximal chains in~$L_W$.
Since $\MC(W)=\C_{(1,1,\ldots,1)}(W)$, Theorem~\ref{jump chains} has the following corollary.
\begin{cor}
\label{max chains}
If $(W,S)$ is a finite irreducible Coxeter system then 
\[\MC(W)\,=\,\frac{h}{2}\,\sum_{s\in S}\MC(W_{\br{s}}).\]
\end{cor}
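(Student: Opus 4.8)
The plan is to observe that Corollary~\ref{max chains} is simply the special case of Theorem~\ref{jump chains} obtained by taking the multichain parameter to be the all-ones sequence $(1,1,\ldots,1)$ of length $k+1$, where $k+1=n=\rank(W)$. First I would note the translation between maximal chains and rank-selected multichain counts: a maximal chain $1=x_0\covered x_1\covered\cdots\covered x_n=c$ in $L_W$ is exactly a multichain $x_1\le x_2\le\cdots\le x_{n-1}$ in which every rank-difference equals $1$, so in the notation of the introduction $\MC(W)=C_{(1,1,\ldots,1)}(W)$ with all $n$ entries of the sequence equal to $1$. This uses that $L_W$ is graded of rank $n$ with rank function $\ell_T$, which follows from Theorem~\ref{BWmain}(i)--(ii) together with the exchange property for reduced $T$-words recalled in Section~\ref{proof}.

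Next I would apply Theorem~\ref{jump chains}. Since every entry $j_i$ of the sequence $(j_1,\ldots,j_{k+1})=(1,\ldots,1)$ equals $1$, the hypothesis ``$j_i=1$'' is satisfied for any choice of $i$; fix $i=1$, say. Deleting $j_i$ from $(1,1,\ldots,1)$ (a sequence of $n$ ones) leaves the sequence $(1,1,\ldots,1)$ of $n-1$ ones, which is precisely the parameter counting maximal chains in a noncrossing partition lattice of rank $n-1$. Theorem~\ref{jump chains} then gives
\[C_{(1,1,\ldots,1)}(W)\,=\,\frac{h}{2}\,\sum_{s\in S}C_{(1,1,\ldots,1)}(W_{\br{s}}).\]
For each $s\in S$, the parabolic subgroup $W_{\br{s}}$ has rank $n-1$, and by Lemma~\ref{para} its noncrossing partition lattice $[1,c']$ is isomorphic to the interval $[1,c']$ in $W$; in particular the relevant count $C_{(1,1,\ldots,1)}(W_{\br{s}})$ of saturated maximal chains in $L_{W_{\br{s}}}$ equals $\MC(W_{\br{s}})$, regardless of whether $W_{\br{s}}$ is irreducible. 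Substituting $\MC$ for each $C_{(1,\ldots,1)}$ yields the claimed identity.

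I do not expect a genuine obstacle here: the entire content is packaged in Theorem~\ref{jump chains}, and the only things to check are the bookkeeping identifications $\MC(W)=C_{(1,\ldots,1)}(W)$ and $\MC(W_{\br{s}})=C_{(1,\ldots,1)}(W_{\br{s}})$, plus the observation that deleting a $1$ from an all-ones sequence produces a shorter all-ones sequence of the correct length. The one point that warrants a sentence of care is that $W_{\br{s}}$ need not be irreducible, so one should make clear that the counts $C_{(1,\ldots,1)}(W_{\br{s}})$ are interpreted via the (possibly reducible) noncrossing partition lattice $L_{W_{\br{s}}}$, consistently with Proposition~\ref{reducible}; but no new argument is needed, since the definition of $C_{(\cdot)}$ makes sense for reducible $W$ and Theorem~\ref{jump chains}'s right-hand side already refers to these groups.
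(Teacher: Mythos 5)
Your proposal is correct and matches the paper's argument, which is exactly the one-line observation that $\MC(W)=C_{(1,1,\ldots,1)}(W)$ so that the corollary is the all-ones specialization of Theorem~\ref{jump chains}. The extra bookkeeping you supply (gradedness of $L_W$, deleting a $1$ from the all-ones sequence, and the reducible-$W_{\br{s}}$ remark) is just a careful expansion of what the paper leaves implicit.
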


Proposition~\ref{reducible} becomes much simpler in this special case.
\begin{prop}
\label{max reducible}
If $W=W_1\times W_2$ with $\rank(W_1)=n_1$ and $\rank(W_2)=n_2$ then 
\[\MC(W)=\MC(W_1)\,\MC(W_2)\,\binom{n_1+n_2}{n_1}.\] 
\end{prop}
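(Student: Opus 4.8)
The plan is to use the fact, noted just before this proposition, that when $W = W_1 \times W_2$ the lattice $L_W$ is the direct product $L_{W_1} \times L_{W_2}$, and then count maximal chains in a product of two graded posets. Since $\rank(L_{W_i}) = n_i$ and $\rank(L_W) = n_1 + n_2$, a maximal chain in $L_W$ has length $n_1 + n_2$, and each covering step increments the rank by one.

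First I would describe a maximal chain in $L_{W_1} \times L_{W_2}$ explicitly: it is a sequence $(a_0,b_0) \covered (a_1,b_1) \covered \cdots \covered (a_{n_1+n_2},b_{n_1+n_2})$ where $(a_0,b_0)$ is the bottom element and $(a_{n_1+n_2},b_{n_1+n_2})$ is the top. In a product poset, a cover relation $(a,b) \covered (a',b')$ occurs exactly when either $a \covered a'$ in $L_{W_1}$ and $b = b'$, or $a = a'$ and $b \covered b'$ in $L_{W_2}$. So along the chain, exactly $n_1$ of the steps are ``moves in the first coordinate'' and the remaining $n_2$ are ``moves in the second coordinate,'' and the first-coordinate moves, read in order, trace out a maximal chain of $L_{W_1}$, while the second-coordinate moves trace out a maximal chain of $L_{W_2}$.

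Next I would assemble the count: a maximal chain of $L_W$ is determined by (i) a choice of which $n_1$ of the $n_1+n_2$ steps are first-coordinate moves, giving the factor $\binom{n_1+n_2}{n_1}$; (ii) a maximal chain of $L_{W_1}$, giving the factor $\MC(W_1)$; and (iii) a maximal chain of $L_{W_2}$, giving the factor $\MC(W_2)$. These three choices are independent and the correspondence is a bijection, so $\MC(W) = \MC(W_1)\,\MC(W_2)\,\binom{n_1+n_2}{n_1}$. This can also be seen as the special case $k = n_1+n_2$ with all $j_i = 1$ of Proposition~\ref{reducible}, where the sum over sequences $(j'_1,\ldots,j'_{k+1})$ summing to $n_1$ with each $j'_i \le 1$ collapses to a sum over $n_1$-element subsets of the $k$ positions, i.e.\ $\binom{n_1+n_2}{n_1}$ terms, each equal to $\MC(W_1)\MC(W_2)$.

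I do not expect a serious obstacle here; the only thing requiring minor care is verifying that the interleaving map is genuinely a bijection — that distinct triples give distinct chains and every chain arises — which is the standard shuffle argument for chains in a product of graded posets. One should also note explicitly that $L_W \cong L_{W_1} \times L_{W_2}$ respects the grading (the rank of $(x,y)$ is $\ell_T(x) + \ell_T(y)$), which is immediate since absolute length is additive on a direct product of Coxeter groups.
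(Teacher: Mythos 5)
Your proof is correct and matches the paper's intent: the paper gives no separate argument for this proposition, treating it as the immediate specialization of Proposition~\ref{reducible} (equivalently, the standard shuffle count of maximal chains in the product $L_{W_1}\times L_{W_2}$), which is exactly what you carry out. The only quibble is a harmless indexing slip in your last paragraph: for $\MC(W)=C_{(1,\ldots,1)}(W)$ the sequence has $k+1=n_1+n_2$ entries (so $k=n_1+n_2-1$), and the sum in Proposition~\ref{reducible} runs over $n_1$-element subsets of these $n_1+n_2$ positions, giving the same $\binom{n_1+n_2}{n_1}$ terms each equal to $\MC(W_1)\,\MC(W_2)$.
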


The recursions in Corollary~\ref{max chains} and Proposition~\ref{max reducible} can be solved to give formulas or values for each finite 
Coxeter group.
The results are tabulated below, followed by examples illustrating how they were obtained.

\vspace{.05 in}

\centerline{\small
\begin{tabular}{|c|c|c|c|c|c|c|c|c|c|}
\hline
$A_n$&$B_n$&$D_n$&$E_6$&$E_7$&$E_8$&$F_4$&$H_3$&$H_4$&$I_2(m)$\\\hline
&&&&&&&&&\\[-4mm]\hline
$\!(n+1)^{n-1}\!\!$&$n^n\!$&$2(n-1)^n\!$&$41472$&$1062882$&$37968750$&$432$&$50$&$1350$&$m$\\\hline
\end{tabular}}

\vspace{.05 in}

\begin{example}\label{easy}
There is one maximal chain in~$L_W$ when~$W$ has rank zero.
For $W=A_1$, since $h=2$ we have $\MC(A_1)=\frac{2}{2}\cdot 1=1$.
For $W=I_2(m)$ we have $h=m$, so $\MC(I_2(m))=\frac{m}{2}(1+1)=m$.
\end{example}

\begin{example}\label{h3}
For $W=H_3$, $h=10$ and the maximal parabolic subgroups are $I_2(5)$, $A_1\times A_1$ and $A_2$.
Corollary~\ref{max chains} and Proposition~\ref{max reducible} say that
\[\MC(H_3)=\frac{10}{2}\left(5+1\cdot 1\cdot\binom{2}{1}+3\right)=50.\]
\end{example}

\begin{example}\label{classical}
In each classical case, the formula for $\MC(W)$ is proved by induction, applying Abel's identity (see \cite{Abel}).
For $W=A_n$, the inductive step is
\[\sum_{i=0}^{n-1}\binom{n-1}{i}(i+1)^{i-1}(n-i)^{n-i-2}=2(n+1)^{n-2}.\]
This is proved by rewriting the binomial coefficient in the left side as a sum of two binomial coefficients, splitting into two sums, and reversing the order of summation in one of the sums.
The two summations are then identical, and by Abel's identity, each equals $(n+1)^{n-2}$.
For $W=B_n$, the inductive step is 
\[\sum_{i=0}^{n-1}\binom{n-1}{i}i^i(n-i)^{n-i-2}=n^{n-1},\]
which is proved by reversing the order of summation and applying Abel's identity.
For $W=D_n$, the inductive step is
\[\sum_{i=2}^{n-1}\binom{n-1}{i}(i-1)^i(n-i)^{n-i-2}=(n-1)^{n-1}-n^{n-2}.\]
This is proved by evaluating the left side from $i=0$ to $n-1$, reversing the order of summation, applying Abel's identity and then subtracting off the $i=0$ term.
\end{example}

The results tabulated above constitute a proof, without brute-force computer counting, of a uniform formula for $\MC(W)$ pointed out in \cite[Proposition~9]{Chapoton}.
\begin{theorem}\label{M}
If $W$ is a finite irreducible\footnote{When $W$ is reducible, the formula holds with $h^n$ replaced by $\prod_{s\in S}h_s$. (See Remark~\ref{one formula}.)} Coxeter group then 
\[\MC(W)\,=\,\frac{n!\,h^n}{|W|}.\]
\end{theorem}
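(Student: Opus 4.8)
The plan is to prove the formula $\MC(W) = n!\,h^n/|W|$ by induction on the rank $n$, using Corollary~\ref{max chains} as the driving recursion together with the known product formula $|W| = \prod_{i=1}^n d_i$ for the degrees $d_i$ of $W$. The base case $n=0$ is the rank-zero statement $\MC(W)=1$ from Example~\ref{easy}, which matches $0!\,h^0/|W|=1$ (with the empty product convention), and the rank-one case $A_1$ is also covered there.

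For the inductive step I would substitute the inductive hypothesis into Corollary~\ref{max chains}. Each maximal parabolic $W_{\br{s}}$ has rank $n-1$, but it need not be irreducible, so strictly speaking one must use the reducible version from the footnote (or Proposition~\ref{max reducible} together with Remark~\ref{one formula}); I would first check that the claimed formula $\MC(W)=n!\prod_{s\in S}h_s/|W|$ is stable under direct products, i.e.\ that it is consistent with Proposition~\ref{max reducible} via the identities $|W_1\times W_2| = |W_1|\,|W_2|$ and $(n_1+n_2)!/(n_1!\,n_2!) = \binom{n_1+n_2}{n_1}$. Granting that, the recursion becomes
\[
\MC(W) \;=\; \frac{h}{2}\sum_{s\in S}\frac{(n-1)!\,\prod_{s'\in S-\set{s}}h_{s'}}{|W_{\br{s}}|}.
\]
So it remains to show
\[
\frac{h}{2}\sum_{s\in S}\frac{(n-1)!\,\prod_{s'\in S-\set{s}}h_{s'}}{|W_{\br{s}}|}
\;=\;\frac{n!\,h^{n}}{|W|},
\]
which, after clearing $(n-1)!$ and writing $|W| = |W_{\br{s}}|\cdot(\text{ratio})$, reduces to a purely numerical identity about the orders of maximal parabolics, their Coxeter numbers, and $h$.

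The main obstacle is exactly this last numerical identity: it is not a formal consequence of the recursion but genuinely requires type-by-type verification, since the $h_{s'}$ and the parabolic orders $|W_{\br{s}}|$ depend on the detailed structure of each Coxeter diagram. My plan is therefore to reduce the statement, as above, to the clean identity
\[
\sum_{s\in S}\frac{|W|}{|W_{\br{s}}|}\;\prod_{s'\in S-\set{s}}h_{s'}
\;=\;\frac{2\,n\,h^{n}}{h}
\;=\;2n\,h^{n-1}
\]
in the irreducible case (and its direct-product counterpart in general), and then confirm it for each $W$ in the classification $A_n, B_n, D_n, E_6, E_7, E_8, F_4, H_3, H_4, I_2(m)$ using the table of maximal chain numbers already displayed. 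For the infinite families the inductive identities are precisely the Abel-identity sums recorded in Example~\ref{classical}, so the verification there amounts to citing those computations; for the exceptional types one checks the single equation against the tabulated values, as illustrated for $H_3$ in Example~\ref{h3}. In this way the recursion plus the classification yields the uniform closed form, even though the derivation itself is not uniform.
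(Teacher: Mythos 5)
Your proposal is correct and follows essentially the same route as the paper: the paper also proves Theorem~\ref{M} by solving the recursion of Corollary~\ref{max chains} (together with Proposition~\ref{max reducible} for the reducible parabolics) type-by-type, with the inductive steps for $A_n$, $B_n$, $D_n$ handled by exactly the Abel-identity sums of Example~\ref{classical} and the exceptional and dihedral cases checked directly as in Examples~\ref{easy} and~\ref{h3}. Your reformulation as a single numerical identity $\sum_{s\in S}\frac{|W|}{|W_{\br{s}}|}\prod_{s'}h_{s'}=2nh^{n-1}$ is just a repackaging of that same type-by-type verification.
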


%sinceSIAM:  rephrased first clause of next sentence.
As explained in~\cite{Chapoton}, Theorem~\ref{M} follows from a more general fact that has been verified type-by-type:
For irreducible~$W,$ the zeta polynomial of~$L_W$ is the Fuss-Catalan number $\Cat^{(m)}(W)$:
\begin{equation}
\label{Catm}
Z(L_W,m+1)=\Cat^{(m)}(W)=\prod_{i=1}^n\frac{mh+e_i+1}{e_i+1}.
\end{equation}
The numbers $e_i$ are fundamental numerical invariants called the exponents of~$W$.
The zeta polynomial $Z(P,q)$ of a poset $P$ counts, for each $q$, multichains $p_1\le p_2\le\cdots\le p_{q-1}$ in $P$.
See \cite[Section~3.11]{EC1} for details on zeta polynomials.
By \cite[Proposition~3.11.1]{EC1}, the leading term of $Z(P,q)$ is $(Mq^d)/d!$, where $d$ is the length of the longest chain in $P$ and $M$ is the number of chains of length $d$. 
Maximal chains in~$L_W$ have length $n=\rank(W)$, so the theorem follows by taking the coefficient of $m^n$ in Equation~(\ref{Catm}) and applying the fact that $|W|=\prod_{i=1}^n(e_i+1)$.

The proof of Theorem~\ref{M} by zeta polynomials suggests an alternate proof of Corollary~\ref{max chains}.
By \cite[Proposition~8.4]{gcccc}, $\Cat^{(m)}(W)$ also counts the facets of the generalized cluster complex associated to an irreducible~$W$.
A ``rotation'' of order $mh+2$ on the generalized cluster complex leads to a recursion \cite[Proposition~8.3]{gcccc} on $\Cat^{(m)}(W)$ and thus on zeta polynomials of~$L_W$:
\begin{equation}\label{zeta recursion}
Z(L_W,m+1)\,=\,\frac{mh+2}{2n}\,\sum_{s\in S}Z(L_{W_{\br{s}}},m+1).
\end{equation}
Corollary~\ref{max chains} arises by extracting the coefficient of $m^n$ in Equation~(\ref{zeta recursion}).
The juxtaposition of this alternate proof with the proof via Theorem~\ref{jump chains} is striking in that two different dihedral symmetries are related by passing to leading coefficients.

The proof of Corollary~\ref{max chains} via Equation~(\ref{zeta recursion}) can presumably be made uniform.
The fact that $Z(L_W,2)$ counts facets of the cluster complex (the case $m=1$ of the generalized cluster complex) is proven uniformly in~\cite{ABMW}, based on results of~\cite{BWlattice}.
Recently the results of~\cite{BWlattice} were extended~\cite{Tzanaki} to the case $m\ge 1$, and presumably the analogous extension of~\cite{ABMW} will eventually be undertaken.
The recursion counting facets of the generalized cluster complex was proven uniformly, except for \cite[Theorems~3.4 and~3.7]{gcccc}.
However, the extension of the results of~\cite{ABMW} to the case $m\ge 1$ can be expected to provide uniform proofs of \cite[Theorems~3.4 and~3.7]{gcccc}.
It should be stressed that a uniform proof of Equation~(\ref{Catm}), or even Theorem~\ref{M}, is completely lacking.
Indeed, a uniform proof of Equation~(\ref{Catm}) would specialize to a uniform proof that the number of elements of~$L_W$ equals $\Cat^{(1)}(W)$.
This is an important open problem in~$W$-Catalan combinatorics.

\subsection*{Reduced $T$-words}\label{TW sec}
Let $\TW_k(W)$ be the number of reduced $T$-words for elements of absolute length $k$ in~$L_W$.
These are chains $x_0<x_1<x_2<\cdots<x_k$ in~$L_W$ with $\ell_T(x_i)=i$ for each $i$, counted by $\C_{(1,\ldots,1,n-k)}(W)$.
Theorem~\ref{jump chains} implies:
\begin{cor}\label{TW}
If $(W,S)$ is a finite irreducible Coxeter system then 
\[\TW_k(W)\,=\,\frac{h}{2}\,\sum_{s\in S}\TW_{k-1}(W_{\br{s}}).\]
\end{cor}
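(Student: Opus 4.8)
The plan is to derive Corollary~\ref{TW} as an immediate specialization of Theorem~\ref{jump chains}, so that essentially all of the work is in matching up index sequences. First I would recall the identification, already noted in the text, that a chain $x_0<x_1<\cdots<x_k$ in $L_W$ with $\ell_T(x_i)=i$ for all $i$ is encoded (via the prefix characterization of absolute order, or equivalently via its delta sequence in the sense of Proposition~\ref{permute}) by a reduced $T$-word $t_1t_2\cdots t_k$ for the element $x_k$, which has absolute length $k$. Hence $\TW_k(W)=C_{(1,1,\ldots,1,n-k)}(W)$, where the index sequence has $k$ entries equal to $1$ followed by a single entry $n-k$, giving $k+1$ entries in all, summing to $n=\rank(W)$.

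Next, assuming $k\ge 1$, I would apply Theorem~\ref{jump chains} to this sequence with $i=1$, which is legitimate since $j_1=1$. This gives
\[\TW_k(W)=C_{(1,1,\ldots,1,n-k)}(W)=\frac{h}{2}\sum_{s\in S}C_{(1,\ldots,1,n-k)}(W_{\br{s}}),\]
where on the right the index sequence now has $k-1$ entries equal to $1$ followed by the entry $n-k$. The final step is to observe that $\rank(W_{\br{s}})=n-1$ and $(n-1)-(k-1)=n-k$, so that $(1,\ldots,1,n-k)$ with $k-1$ ones is precisely the index sequence attached to $\TW_{k-1}(W_{\br{s}})$ under the identification of the first paragraph; thus $C_{(1,\ldots,1,n-k)}(W_{\br{s}})=\TW_{k-1}(W_{\br{s}})$ and the corollary follows.

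Two minor points deserve a sentence of care. The parabolic subgroup $W_{\br{s}}$ on the right need not be irreducible, but this causes no difficulty: the identity $C_{(1,\ldots,1,n-k)}(W_{\br{s}})=\TW_{k-1}(W_{\br{s}})$ rests only on the delta-sequence encoding of saturated chains and is valid for any finite Coxeter group (one could alternatively unwind it through Proposition~\ref{reducible}, but that is unnecessary). Also, had one wished to delete a later copy of $1$ from the index sequence, Proposition~\ref{permute} would allow the entries to be reordered first; taking $i=1$ simply sidesteps this. I do not expect any genuine obstacle here, since the substance is entirely contained in Theorem~\ref{jump chains}; the only thing to get right is the bookkeeping of the rank and length parameters, together with the sanity check at $k=1$, where the corollary collapses to the classical statement that $L_W$ has $nh/2$ atoms (using $\TW_0(W_{\br{s}})=1$).
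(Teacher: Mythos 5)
Your proof is correct and is essentially the paper's own argument: identify $\TW_k(W)$ with $C_{(1,\ldots,1,n-k)}(W)$ via the chain/reduced-$T$-word encoding and then specialize Theorem~\ref{jump chains} by deleting an entry equal to $1$, with the rank bookkeeping $\rank(W_{\br{s}})=n-1$ exactly as you state. The extra remarks about reducible parabolics and about which copy of $1$ to delete are fine but not needed beyond what the paper already takes for granted.
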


Inspired by the alternate proof of Corollary~\ref{max chains}, we notice a connection between $\TW_k$ and the generalized cluster complex.
Let $f_k(W,m)$ be the number of $k$-vertex (i.e. $(k-1)$-dimensional) simplices in the generalized cluster complex associated to~$W$.
When~$W$ is irreducible, \cite[Proposition~8.3]{gcccc} says that
\begin{equation}\label{fk}
f_k(W,m)\,=\,\frac{mh+2}{2k}\,\sum_{s\in S}f_{k-1}(W_{\br{s}},m).
\end{equation}
Taking leading coefficients and interpreting the result as a recursion on $k!$ times the leading coefficient $\coef{m^k}{f_k(W,m)}$, we obtain a recursion identical to Corollary~(\ref{TW}).
Using Proposition~\ref{reducible} and another assertion of \cite[Proposition~8.3]{gcccc}, one easily checks that $\TW_k(W)$ also behaves like $k!\coef{m^k}{f_k(W,m)}$ when~$W$ is reducible and when $k=0$.
Thus by induction on the rank of $W,$ we have
\begin{theorem}\label{TW f}
For any finite Coxeter group $W,$
\[\TW_k(W)=k!\coef{m^k}{f_k(W,m)}.\]
\end{theorem}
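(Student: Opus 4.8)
The plan is to prove $\TW_k(W)=k!\coef{m^k}{f_k(W,m)}$ by induction on $\rank(W)$, splitting the inductive step according to whether $W$ is irreducible or reducible. Write $g_k(W)=k!\coef{m^k}{f_k(W,m)}$ for the right-hand side; the goal is $\TW_k(W)=g_k(W)$ for every $k\ge 0$. First I would dispose of the boundary cases. For $k=0$, the only reduced $T$-word of length zero is the empty word for the identity, so $\TW_0(W)=1$, while the only face of the generalized cluster complex with no vertices is the empty face, so $f_0(W,m)=1$ and $g_0(W)=1$. For $k>n=\rank(W)$, there is no element of $L_W$ of absolute length $k$, so $\TW_k(W)=0$, and the generalized cluster complex has dimension $n-1$, so $f_k(W,m)=0$ and $g_k(W)=0$. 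In particular the identity holds when $\rank(W)=0$, anchoring the induction.

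For the inductive step with $W$ irreducible of rank $n\ge 1$ and $1\le k\le n$, I would extract the coefficient of $m^k$ from the recursion~\eqref{fk}. Since $\sum_{s\in S}f_{k-1}(W_{\br{s}},m)$ has $m$-degree at most $k-1$, only the linear term $\tfrac{h}{2k}\,m$ of the factor $\tfrac{mh+2}{2k}$ can contribute to the coefficient of $m^k$ in the product, so
\[\coef{m^k}{f_k(W,m)}=\frac{h}{2k}\sum_{s\in S}\coef{m^{k-1}}{f_{k-1}(W_{\br{s}},m)}.\]
Multiplying through by $k!$ and using $k!/(2k)=(k-1)!/2$ gives $g_k(W)=\tfrac{h}{2}\sum_{s\in S}g_{k-1}(W_{\br{s}})$, which is exactly the recursion satisfied by $\TW_k(W)$ in Corollary~\ref{TW}. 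Each $W_{\br{s}}$ has rank $n-1$, so the inductive hypothesis yields $\TW_{k-1}(W_{\br{s}})=g_{k-1}(W_{\br{s}})$, hence $\TW_k(W)=g_k(W)$; the case $k=0$ was handled above.

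For the reducible step, write $W=W_1\times W_2$ with $\rank(W_i)=n_i<n$, and re-express both sides as the same binomial convolution. Applying Proposition~\ref{reducible} to the sequence $(1,1,\dots,1,n-k)$ with $k$ ones (which computes $\TW_k(W)$), each of the first $k$ entries of the $W_1$-part of an admissible splitting is $0$ or $1$; if exactly $a$ of them are $1$, then by Proposition~\ref{permute} the $W_1$-factor equals $\TW_a(W_1)$ and the $W_2$-factor equals $\TW_{k-a}(W_2)$ (the final entries being forced and matching up), and there are $\binom{k}{a}$ choices of which entries are $1$, so $\TW_k(W)=\sum_{a=0}^{k}\binom{k}{a}\TW_a(W_1)\,\TW_{k-a}(W_2)$. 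On the other side, the $f$-numbers of a product generalized cluster complex are given by the convolution $f_k(W,m)=\sum_{a+b=k}f_a(W_1,m)\,f_b(W_2,m)$ (another part of \cite[Proposition~8.3]{gcccc}); since $f_a(W_1,m)$ and $f_b(W_2,m)$ have $m$-degree at most $a$ and $b$, the coefficient of $m^k$ in this sum is $\sum_{a+b=k}\coef{m^a}{f_a(W_1,m)}\coef{m^b}{f_b(W_2,m)}$, and multiplying by $k!$ gives $g_k(W)=\sum_{a=0}^{k}\binom{k}{a}g_a(W_1)\,g_{k-a}(W_2)$, the same convolution. The inductive hypothesis applied to $W_1$ and $W_2$ then finishes this case, and with it the induction.

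I do not expect a genuine obstacle: the argument is a clean induction powered by Corollary~\ref{TW} together with the recursion and product properties of generalized cluster complexes quoted from~\cite{gcccc}. The step requiring the most care is the reducible case, where one must match Proposition~\ref{reducible}---a sum over integer splittings with inequality constraints---with a plain binomial convolution; this rests on the observation that splitting a string of $1$'s amounts to choosing a subset, together with Proposition~\ref{permute} to recognize the resulting rank-selected counts as the numbers $\TW_a$. One should also check the edge cases $k=0$ and $k>\rank(W)$ carefully, so that the convolutions terminate on the correct values and the induction is genuinely anchored.
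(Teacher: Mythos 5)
Your proposal is correct and follows essentially the same route as the paper: induction on $\rank(W)$, extracting the coefficient of $m^k$ from the recursion~(\ref{fk}) to match Corollary~\ref{TW} in the irreducible case, and using Proposition~\ref{reducible} together with the product assertion of \cite[Proposition~8.3]{gcccc} (plus the $k=0$ check) in the reducible case. You have merely written out in full the details the paper leaves as ``one easily checks,'' including the binomial-convolution matching via Proposition~\ref{permute}, and those details are right.
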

In particular, (non-uniform) formulas for $\TW_k(W)$ can be obtained from \cite[Theorem~8.5]{gcccc}.
%sinceSIAM:  corrected/rephrased the following sentence thanks to Jon McCammond.
In light of the alternate proof of Theorem~\ref{M} via zeta polynomials, Theorem~\ref{TW f} suggests that $Z(L_W^{[0,k]},m+1)=f_k(W,m)$, where $L_W^{[0,k]}$ is the restriction of~$L_W$ to ranks $0,\ldots,k$.
However, this fails even in the smallest examples.

\subsection*{Edges}\label{edge sec}
Let $\E(W)$ be the number of edges in the Hasse diagram of~$L_W$.
Theorem~\ref{jump chains} specializes to a recursion on cover relations $x\covered y$ in~$L_W$ with $\ell_T(x)=i$.
The right side is a sum over $s\in S$ of the number of elements of $L_{W_{\br{s}}}$ at rank $i$.
Summing from $i=0$ to $i=n-1$ we obtain the following recursion on $\E(W)$.
\begin{cor}\label{E}
If $(W,S)$ is a finite irreducible Coxeter system then 
\[\E(W)\,=\,\frac{h}{2}\,\sum_{s\in S}\NC(W_{\br{s}}).\]
\end{cor}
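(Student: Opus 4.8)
The plan is to express $\E(W)$ as a sum over ranks of the numbers of cover relations at each rank, to recognize each of these numbers as a value of $C_{(\,\cdot\,,1,\,\cdot\,)}(W)$, to apply Theorem~\ref{jump chains} to each, and finally to sum the resulting parabolic contributions over all ranks.

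First I would translate cover relations into the multichain notation. Using the conventions of Theorem~\ref{jump chains}, a two-element multichain $x_1\le x_2$ is recorded by a triple $(j_1,j_2,j_3)$ with $j_1=\ell_T(x_1)$, $j_2=\ell_T(x_2)-\ell_T(x_1)$ and $j_3=n-\ell_T(x_2)$. A cover relation $x\covered y$ with $\ell_T(x)=i$ thus corresponds exactly to a multichain counted by $C_{(i,1,n-i-1)}(W)$, since $\ell_T(y)=i+1$. As $i$ ranges over $0,1,\ldots,n-1$ these account for every edge of the Hasse diagram, so
\[\E(W)=\sum_{i=0}^{n-1}C_{(i,1,n-i-1)}(W).\]

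Next I would apply Theorem~\ref{jump chains} to each summand, deleting the middle entry $j_2=1$ of the length-three sequence $(i,1,n-i-1)$; since $(W,S)$ is irreducible this gives
\[C_{(i,1,n-i-1)}(W)=\frac{h}{2}\sum_{s\in S}C_{(i,n-i-1)}(W_{\br{s}}).\]
Here $\rank(W_{\br{s}})=n-1=i+(n-i-1)$, and a length-one sequence $(i,n-i-1)$ merely records the rank of a single element of $L_{W_{\br{s}}}$, so $C_{(i,n-i-1)}(W_{\br{s}})$ is the number of elements of $L_{W_{\br{s}}}$ of rank $i$. (Theorem~\ref{jump chains} permits the $W_{\br{s}}$ to be reducible, and $C$ and $\NC$ are defined for all finite Coxeter groups, so nothing goes wrong.)

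Finally I would sum over $i$ and interchange the two sums:
\[\E(W)=\frac{h}{2}\sum_{s\in S}\sum_{i=0}^{n-1}C_{(i,n-i-1)}(W_{\br{s}}).\]
Every element of $L_{W_{\br{s}}}$ has rank between $0$ and $\rank(W_{\br{s}})=n-1$, so the inner sum counts all of $L_{W_{\br{s}}}$ and equals $\NC(W_{\br{s}})$, which is exactly the claimed identity. I expect no substantive obstacle: all the content sits in Theorem~\ref{jump chains}, and the only points needing a moment's care are the rank bookkeeping under passage to $W_{\br{s}}$ and the observation that summing the rank-$i$ element counts of $L_{W_{\br{s}}}$ over all $i$ recovers $\NC(W_{\br{s}})$.
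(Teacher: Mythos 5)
Your proposal is correct and follows essentially the same route as the paper: the paper likewise identifies the cover relations with $\ell_T(x)=i$ as the chains counted by a $C_{(\cdot,1,\cdot)}$ with middle entry $1$, applies Theorem~\ref{jump chains} to see that the right side counts rank-$i$ elements of each $L_{W_{\br{s}}}$, and sums over $i=0,\ldots,n-1$ to obtain $\frac{h}{2}\sum_{s\in S}\NC(W_{\br{s}})$. Your write-up simply makes explicit the rank bookkeeping that the paper leaves implicit.
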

Here $\NC(W)$ stands for the number of elements of~$L_W$.
Setting $m=1$ in Equation~(\ref{Catm}), we see that for $W$ irreducible,
\begin{equation}\label{NC}
\NC(W)=\Cat^{(1)}(W)=\prod_{i=1}^n\frac{h+e_i+1}{e_i+1}.
\end{equation}
Furthermore, $\Cat^{(1)}(W)$ counts facets of the cluster complex associated to $W$.
By a recursion \cite[Proposition~3.7]{ga} counting facets of the cluster complex,
\begin{equation}\label{NC recursion}
\NC(W)\,=\,\frac{h+2}{2n}\,\sum_{s\in S}\NC(W_{\br{s}}).
\end{equation}
Comparing Equation~(\ref{NC recursion}) with Corollary~\ref{E}, we obtain a uniform formula for the number of edges of~$L_W$.
\begin{theorem}\label{E formula}
If $(W,S)$ is a finite irreducible Coxeter system then 
\[\E(W)\,=\,\frac{nh}{h+2}\,\NC(W)\,=\,\frac{nh}{|W|}\prod_{i=2}^n(h+e_i+1).\]
\end{theorem}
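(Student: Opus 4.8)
The plan is to combine the two recursions already in hand. Corollary~\ref{E} gives $\E(W)=\frac{h}{2}\sum_{s\in S}\NC(W_{\br{s}})$, while Equation~(\ref{NC recursion})---the Fomin--Zelevinsky recursion counting facets of the cluster complex, transported to $\NC$ via the identification $\NC(W)=\Cat^{(1)}(W)$ of Equation~(\ref{NC})---gives $\NC(W)=\frac{h+2}{2n}\sum_{s\in S}\NC(W_{\br{s}})$. First I would solve the latter for the sum, obtaining $\sum_{s\in S}\NC(W_{\br{s}})=\frac{2n}{h+2}\NC(W)$, and substitute into the former; the factors of $2$ cancel and we are left with $\E(W)=\frac{nh}{h+2}\NC(W)$, which is the first claimed equality.

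For the second equality I would unwind the product formula. By Equation~(\ref{NC}) we have $\NC(W)=\prod_{i=1}^n\frac{h+e_i+1}{e_i+1}$, and by the standard fact $|W|=\prod_{i=1}^n(e_i+1)$ the denominator is exactly $|W|$. The only remaining point is that the smallest exponent of an irreducible finite Coxeter group is $e_1=1$, so the $i=1$ factor of the numerator is $h+e_1+1=h+2$; this cancels against the $h+2$ in the denominator of $\frac{nh}{h+2}$, leaving $\frac{nh}{|W|}\prod_{i=2}^n(h+e_i+1)$. Writing $e_1=1$ explicitly (citing the exponent tables, or the fact that $\sum e_i=nh/2$ together with $e_i+e_{n+1-i}=h$, forcing $e_1=1$) is the one external input that should be stated rather than assumed silently.

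The computation is routine; there is no genuine analytic obstacle once Corollary~\ref{E} and Equation~(\ref{NC recursion}) are granted. The substantive issue, worth flagging in the write-up, is that Equation~(\ref{NC recursion}) is \emph{not} presently available by a uniform argument: it rests on the type-by-type verification of $\NC(W)=\Cat^{(1)}(W)$ (or, for $W$ crystallographic and via \cite{ABMW}, on the cluster-complex side), so Theorem~\ref{E formula} inherits the same non-uniform status. Thus the proof is a three-line algebraic manipulation, and I would present it as such, with the caveat about uniformity stated alongside.
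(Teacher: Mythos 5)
Your proposal is correct and follows the same route as the paper: Theorem~\ref{E formula} is obtained there precisely by comparing Corollary~\ref{E} with Equation~(\ref{NC recursion}) (and then rewriting via Equation~(\ref{NC}), $|W|=\prod_{i=1}^n(e_i+1)$ and $e_1=1$), and the paper likewise notes that the argument is not uniform because it rests on $\NC(W)=\Cat^{(1)}(W)$. Your explicit remark about the $e_1=1$ cancellation is a fair point of bookkeeping but does not change the substance.
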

The proof of Theorem~\ref{E formula} appears to be uniform, but is not, since there is no known uniform proof that $\NC(W)=\Cat^{(1)}(W)$.

\begin{remark}\label{edge remark}
Equation~(\ref{NC recursion}) can also be interpreted in terms of~$L_W$.
The equation can be rearranged to state that
\begin{equation}\label{rearrange}
\sum_{s\in S}\left(\NC(W)-\NC(W_{\br{s}})\right)=\frac{h}{2}\,\sum_{s\in S}\NC(W_{\br{s}}).
\end{equation}
%sinceV1:  changed text below (thanks to Armstrong's comment).
The right side of the equation is $\E(W)$.
The left side of Equation~(\ref{rearrange}) counts pairs $(s,x)\in S\times L_W$ with $x\not\in W_{\br{s}}$.
(Cf. the proof of Lemma~\ref{para}.)
A comparison of the expression for $\E(W_1\times W_2)$ arising from Proposition~\ref{reducible} with the left side of Equation~(\ref{rearrange}) in the case $W=W_1\times W_2$ shows that, even in the reducible case, $E(W)$ counts pairs $(s,x)$ as above.
We have no explanation, within the combinatorics of noncrossing partitions, for the coincidence between these two counts.
\end{remark}

\subsection*{Saturated chains}\label{sat sec}
Theorem~\ref{jump chains} specializes to a recursion on saturated chains $x_0\covered x_1\covered\cdots\covered x_k$ with $\ell_T(x_0)=i$.
Summing over all possible $i$, we obtain the following recursion on the total number $\SC_k(W)$ of saturated chains of length $k$ (i.e.\ $k+1$ elements) in~$L_W$.
\begin{cor}\label{SC}
If $(W,S)$ is a finite irreducible Coxeter system and $k>0$ then
\[\SC_k(W)\,=\,\frac{h}{2}\,\sum_{s\in S}\SC_{k-1}(W_{\br{s}}).\]
\end{cor}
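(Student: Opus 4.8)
The plan is to write $\SC_k(W)$ as a sum of rank-selected chain numbers $C_{(\cdots)}(W)$ and then apply Theorem~\ref{jump chains} termwise. A saturated chain of length $k$ in $L_W$ is a chain $x_1\covered x_2\covered\cdots\covered x_{k+1}$ of $k+1$ elements whose $k$ consecutive rank-differences all equal $1$; if $\ell_T(x_1)=i$, then such a chain is exactly what is counted by $C_{(i,1,1,\ldots,1,n-i-k)}(W)$, a sequence of length $k+2$ with exactly $k$ interior entries equal to $1$. Since $x_{k+1}$ has rank $i+k\le n$ and $x_1$ has rank $i\ge 0$, summing over the admissible starting ranks gives
\[\SC_k(W)\,=\,\sum_{i=0}^{n-k} C_{(i,1,\ldots,1,n-i-k)}(W).\]

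The hypothesis $k>0$ guarantees that each sequence above has at least one interior entry equal to $1$, so Theorem~\ref{jump chains} applies to that entry (by Proposition~\ref{permute} the choice of interior $1$ is immaterial). Deleting it produces the length-$(k+1)$ sequence $(i,1,\ldots,1,n-i-k)$ with $k-1$ interior $1$'s, attached to the rank-$(n-1)$ parabolic $W_{\br{s}}$---consistent since $i+(k-1)+(n-i-k)=n-1$---and the resulting number $C_{(i,1,\ldots,1,n-i-k)}(W_{\br{s}})$ counts the saturated chains of length $k-1$ in $L_{W_{\br{s}}}$ whose bottom element sits at rank $i$. Substituting into the display, interchanging the two finite sums, and observing that the starting rank of a length-$(k-1)$ saturated chain in a rank-$(n-1)$ poset also ranges over $0\le i\le n-k$, I would obtain
\[\SC_k(W)\,=\,\frac{h}{2}\sum_{s\in S}\sum_{i=0}^{n-k} C_{(i,1,\ldots,1,n-i-k)}(W_{\br{s}})\,=\,\frac{h}{2}\sum_{s\in S}\SC_{k-1}(W_{\br{s}}).\]

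Two routine points need attention rather than ingenuity. First, $W_{\br{s}}$ is usually reducible, so $\SC_{k-1}(W_{\br{s}})$ and the $C_{(\cdots)}(W_{\br{s}})$ must be read as counts in the product lattice $L_{W_{\br{s}}}$; this is exactly the setting in which the right-hand side of Theorem~\ref{jump chains} already lives, so nothing new is required (and one could instead phrase the whole argument using Remark~\ref{one formula}). Second, the extreme case $k=1$ should be checked by hand: deletion then yields the two-entry sequence $(i,n-i-1)$, which carries no interior $1$ and simply records the number of rank-$i$ elements of $L_{W_{\br{s}}}$; summing over $i$ gives $\NC(W_{\br{s}})=\SC_0(W_{\br{s}})$, so the corollary degenerates correctly to Corollary~\ref{E}. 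The only genuine obstacle is the bookkeeping: once the encoding of a saturated chain as a multichain under the indexing convention of $C_{(j_1,\ldots,j_{k+1})}$ is pinned down and the summation ranges are matched up, the recursion falls out of Theorem~\ref{jump chains}.
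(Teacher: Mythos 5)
Your proposal is correct and follows essentially the same route as the paper: the paper's (one-sentence) proof likewise specializes Theorem~\ref{jump chains} to the rank-selected counts $C_{(i,1,\ldots,1,n-i-k)}(W)$ of saturated chains with bottom at rank $i$ and then sums over $i$. Your write-up simply makes the bookkeeping (sequence lengths, summation range $0\le i\le n-k$, and the degenerate case $k=1$ recovering Corollary~\ref{E}) explicit, all of which checks out.
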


Corollaries~\ref{max chains} and~\ref{E} are special cases of Corollary~\ref{SC} which lead to uniform formulas, and Equation~(\ref{NC}) is a uniform formula for $\SC_0(W)$.
One more special case leads to a uniform formula.
When $k=n-1$, the recursion of Corollary~\ref{SC} is solved in essentially the same manner as the recursion of Corollary~\ref{max chains}, to obtain, for $W$ irreducible,
\begin{equation}\label{almost}
\SC_{n-1}\,=\,\frac{2\,n!\,h^n}{|W|}.
\end{equation}
This is also a trivial corollary of Theorem~\ref{M}, since~$L_W$ has a unique minimal and maximal element.
Naturally, one seeks a formula for $\SC_k$ that generalizes Theorems~\ref{M} and~\ref{E formula} and Equations~(\ref{NC}) and~(\ref{almost}).
However, the obvious generalization
\begin{equation}\label{obvious}
n(n-1)\cdots(n-k+1)\,\,\frac{h^k}{|W|}\,\prod_{i=k+1}^n(h+e_i+1)
\end{equation}
does not work beyond the cases $k\in\set{0,1,n-1,n}$.
It appears to fail in a way that is exactly analogous to the situation for $f$-numbers and $h$-numbers of cluster complexes, as explained in Section~\ref{m sec}.

\section{$m$-Divisible noncrossing partitions}\label{m sec}
This section is an extended remark about generalizing the results of this paper to the poset of \emph{$m$-divisible noncrossing partitions}, as defined by Armstrong in~\cite{Armstrong}.
We call this poset $L^{(m)}_W$.
 Armstrong names it $\NC_{(m)}(W)$ and also considers the dual poset, under the name $\NC^{(m)}(W)$.
The $m$-divisible noncrossing partitions are the delta sequences of $m$-element multichains in~$L_W$.
(See the proof of Proposition~\ref{permute}.)
In particular, $L_W^{(1)}=L_W$.

Let $\delta=(\delta_0,\ldots,\delta_{k})$ and $\delta'=(\delta'_0,\ldots,\delta'_{k})$ be delta sequences of multichains in~$L_W$.
The definition of $L_W^{(m)}$ sets $\delta\le\delta'$ if and only if $\delta_i\le\delta'_i$ for all $i=1,\ldots,k$.
The poset $L^{(m)}_W$ is a graded meet-semilattice \cite[Theorem~3.4.4]{Armstrong} with rank function $\sum_{i=1}^n\ell_T(\delta_i)$.
Let $\C^{(m)}_{(j_1,j_2,\ldots,j_{k+1})}(W)$ count multichains $x_1\le x_2\le\cdots\le x_k$ in $L^{(m)}_W$ with rank-differences given by $(j_1,j_2,\ldots,j_{k+1})$.
Theorem~\ref{jump chains} generalizes as follows:
\begin{theorem}\label{m jump chains}
If $(W,S)$ is a finite irreducible Coxeter system and $j_i=1$ then
%sinceSIAM: changed j_{k+i} to j_{k+1}
%frank:  inserted comma before \widehat
\[\C^{(m)}_{(j_1,j_2,\ldots,j_{k+1})}(W)\,=\,\frac{mh}{2}\,\sum_{s\in S}\C^{(m)}_{(j_1,j_2,\ldots,\widehat{j_i},\ldots,j_{k+1})}(W_{\br{s}}).\]
\end{theorem}

Theorem~\ref{m jump chains} is proved by a straightforward but notationally cumbersome generalization of the proof of Theorem~\ref{jump chains}, replacing the action of $\br{c_+,c_-}$ with the action of $\br{L^*,R^*}$, as defined in \cite[Section~3.4.6]{Armstrong}.
We omit the details.

Theorem~\ref{m jump chains} implies a formula for the number $\MC^{(m)}$ of maximal chains in $L^{(m)}_W$:
\begin{equation}\label{m max chains}
\MC^{(m)}(W)\,=\,\frac{mh}{2}\,\sum_{s\in S}\MC^{(m)}(W_{\br{s}}).
\end{equation}
The powers of $m$ in the formula can be factored out trivially, so that this recursion is solved exactly as in the case $m=1$.
Thus for $W$ irreducible we have $\MC^{(m)}(W)=\frac{n!(mh)^n}{|W|}$, as was pointed out earlier by Armstrong \cite[Corollary~3.6.10]{Armstrong}.

The analog of Corollary~\ref{TW} behaves similarly: powers of $m$ can be factored out.
Thus saturated chains $x_0<x_1<x_2<\cdots<x_k$ such that $x_0$ is the unique minimal element of $L^{(m)}_W$ are counted by $m^k\TW_k(W)$.
It is an easy exercise, given some familiarity with $L^{(m)}_W$, to construct an $(m^k)$-to-1 map from these ``lower-saturated'' chains in $L^{(m)}_W$ to lower-saturated chains in~$L_W$. 
There are also $m$-analogs of the formulas given in Corollary~\ref{E}, Theorem~\ref{E formula} and Remark~\ref{edge remark}, for the number $E^{(m)}(W)$ of edges in $L^{(m)}_W$.
In each case, with $h$ replaced by $mh$ and with superscripts ``$(m)$'' in the appropriate places, the analogous proof works.

Corollary~\ref{SC} generalizes, with similar modifications, to a recursion counting saturated chains in $L^{(m)}_W$.
The resulting type-by-type formulas for $\SC_k^{(m)}(W)$ are polynomials in $m$.
Factoring these into irreducible factors reveals multiplicative formulas which seem to be loosely based on Equation~(\ref{obvious}), with $h$ replaced by $mh$ throughout.
These factorizations exhibit the odd phenomena first observed in formulas \cite[Theorems~8.5 and~10.2]{gcccc} for the $f$- and $h$-numbers of generalized cluster complexes, including ``levels'' of exponents and single ``mysterious'' factors.
The formulas for $\SC_k^{(m)}(W)$ appear to be badly behaved exactly when $f_k(W,m)$ and $h_k(W,m)$ are badly behaved, and the bad behaviors take the same form.

%sinceV1:  Added acknowledgments
%sinceSIAM:  Added Jon McC to the acknowledgments.
\section{Acknowledgments} 
I thank Drew Armstrong, Christos Athanasiadis, Jon McCammond and John Stembridge for helpful comments on earlier versions of this paper.

\end{document}